\newtheorem{theorem}{Theorem}[section]
\newtheorem{lemma}[theorem]{Lemma}
\newtheorem{corollary}[theorem]{Corollary}
\theoremstyle{definition}
\theoremstyle{remark}
\numberwithin{equation}{section}
\begin{document}
\title[Schatten $p$-norm inequalities]{Schatten $p$-norm inequalities
related to a characterization of inner product spaces}
\author[O. Hirzallah, F. Kittaneh, M.S. Moslehian]{Omar Hirzallah$^1$,\ Fuad
Kittaneh$^2$\ and\ Mohammad Sal Moslehian$^3$}
\address{$^1$Department of Mathematics, Hashemite University, Zarqa, Jordan.}
\email{o.hirzal@hu.edu.jo}
\address{$^2$Department of Mathematics, University of Jordan, Amman, Jordan.}
\email{fkitt@ju.edu.jo}
\address{$^3$Department of Pure Mathematics, Ferdowsi University of Mashhad, P.O. Box 1159, Mashhad 91775, Iran;
\newline Center of Excellence in Analysis on
Algebraic Structures (CEAAS), Ferdowsi University of Mashhad, Iran.}
\email{moslehian@ferdowsi.um.ac.ir and moslehian@ams.org}
\subjclass[2000]{Primary: 47A30; Secondary: 46C15, 47B10, 47B15.}
\keywords{Schatten $p$-norm, positive operator, inequality, inner
product space.}

\begin{abstract}
Let $A_1, \cdots A_n$ be operators acting on a separable complex Hilbert
space such that $\sum_{i=1}^n A_i=0$. It is shown that if $A_1, \cdots A_n$
belong to a Schatten $p$-class, for some $p>0$, then
\begin{equation*}
2^{p/2}n^{p-1} \sum_{i=1}^n \|A_i\|^p_p \leq \sum_{i,j=1}^n\|A_i\pm A_j\|^p_p
\end{equation*}
for $0<p\leq 2$, and the reverse inequality holds for $2\leq p<\infty$.
Moreover,
\begin{equation*}
\sum_{i,j=1}^n\|A_i\pm A_j\|^2_p \leq 2n^{2/p} \sum_{i=1}^n \|A_i\|^2_p
\end{equation*}
for $0<p\leq 2$, and the reverse inequality holds for $2\leq p<\infty$.
These inequalities are related to a characterization of inner product spaces
due to E.R. Lorch.
\end{abstract}

\maketitle

%--------------------------------------------------------------------------------------------------%

\section{Introduction}

Let $B({\mathcal{H}})$ be the algebra of all bounded linear
operators on a separable complex Hilbert space ${\mathcal{H}}$. Let
$A\in B({\mathcal{H}})$ be compact, and let $0 < p < \infty$. The
Schatten $p$-norm (quasi-norm) for $1 \leq p < \infty \,(0< p < 1)$
is defined by $\|A\|_p=(\text{tr} |A|^p)^{1/p}$, where $\text{tr}$
is the usual trace functional and $|A|=(A^*A)^{1/2}$. Clearly
$\left\|\,|A|^2\,\right\|_{p/2}=\|A\|_p^2$ for $p>0$. For $p>0$, the
Schatten $p$-class, denoted by $C_p$, is defined to be the set of
those compact operators $A$ for which $\|A\|_p$ is finite. When
$p=2$, the Schatten $p$-norm $\|A\|_2=(\text{tr} |A|^2)^{1/2}$ is
called the Hilbert--Schmidt norm of $A$. For $p>0$, $C_p$ is a
two-sided ideal in $B({\mathcal{H}})$. For $1 \leq p <\infty$, $C_p$
is a Banach space; in particular, if $A_1, \cdots, A_n \in C_p$,
then the triangle inequality for $\|\cdot\|_p$ asserts that
\begin{eqnarray}  \label{1}
\left\|\sum_{i=1}^n A_i\right\|_p \leq \sum_{i=1}^n \|A_i\|_p\,.
\end{eqnarray}
However, for $0<p<1$, the quasi-norm $\|.\|_p$ does not satisfy the
triangle inequality. It has been shown in \cite{B-K1} (see, also,
\cite{B-K2}) that if $A_1, \cdots, A_n \in C_p$ are positive and $0
<p \leq 1$, then
\begin{eqnarray}  \label{2}
\sum_{i=1}^n \|A_i\|_p \leq \left\|\sum_{i=1}^n A_i\right\|_p\,.
\end{eqnarray}
For more information on the theory of the Schatten $p$-classes, the reader
is referred to \cite{BHA, SIM}.

It is well-known that a normed space $X$ is an inner product space if and
only if for every $x, y \in X$, we have
\begin{eqnarray}  \label{5}
\|x+y\|^2+\|x-y\|^2 = 2(\|x\|^2+\|y\|^2)\,.
\end{eqnarray}
The identity (\ref{5}) is known as the parallelogram law. A version of this
fundamental law holds for the Hilbert-Schmidt norm. Generalizations of the
parallelogram law for the Schatten $p$-norms have been given in the form of
the celebrated Clarkson inequalities (see \cite{B-K2} and references
therein). These inequalities have proven to be very useful in analysis,
operator theory, and mathematical physics.

Another known characterization of inner product spaces is due to E.R. Lorch
\cite{LOR}. He proved that a normed space $X$ is an inner product space if
and only if for a fixed integer $n\geq 3$, and $x_1, \cdots, x_n \in X$ with
$\sum_{i=1}^nx_i=0$, we have
\begin{eqnarray}
\sum_{i,j=1}^n\|x_i-x_j\|^2=2n\sum_{i=1}^n\|x_i\|^2.
\end{eqnarray}
Since $C_2$ is a Hilbert space under the inner product $\langle A,
B\rangle=\text{tr} (B^*A)$, it follows that if $A_1, \cdots, A_n \in
C_2$ with $\sum _{i=1}^n A_i=0$, then
\begin{eqnarray}  \label{4}
\sum_{i,j=1}^n\|A_i- A_j\|^2_2=2n \sum_{i=1}^n \|A_i\|^2_2\,.
\end{eqnarray}

In this paper, we establish operator inequalities for the Schatten $p$-norms
that form natural generalizations of the identity (\ref{4}). Our
inequalities presented here seem natural enough and applicable to be widely
useful.

%--------------------------------------------------------------------------------------------------%

\section{Main results}

To achieve our goal, we need the following lemma (see \cite{B-K1, KIT}).

\begin{lemma}
\label{lem} Let $A_1, \cdots, A_n \in C_p$ for some $p>0$. If $A_1, \cdots,
A_n$ are positive, then

\begin{eqnarray*}
n^{p-1}\sum_{i=1}^n \|A_i\|_p^p \leq \left\|\sum_{i=1}^n A_i \right\|_p^p
\leq \sum_{i=1}^n \|A_i\|_p^p \qquad (\text{a})
\end{eqnarray*}
for $0 < p \leq 1$; and
\begin{eqnarray*}
\sum_{i=1}^n \|A_i\|_p^p \leq \left\|\sum_{i=1}^n A_i \right\|_p^p \leq
n^{p-1} \sum_{i=1}^n \|A_i\|_p^p \qquad (\text{b})
\end{eqnarray*}
for $1 \leq p < \infty$.
\end{lemma}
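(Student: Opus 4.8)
The plan is to split the four inequalities into two groups according to how they are proved. The two inequalities carrying the factor $n^{p-1}$ — the lower bound in (a) and the upper bound in (b) — will be obtained cheaply by combining scalar power–mean inequalities for $t\mapsto t^p$ with the norm inequalities (\ref{1}) and (\ref{2}) already at our disposal. The two remaining bounds are the genuinely operator-theoretic facts; these I will reduce to the two-operator case and settle with a concave/convex trace inequality.

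First I record the scalar ingredient. For nonnegative reals $a_1,\dots,a_n$, Jensen's inequality applied to the concave function $t\mapsto t^p$ ($0<p\le 1$) gives $n^{p-1}\sum_{i=1}^n a_i^p\le\bigl(\sum_{i=1}^n a_i\bigr)^p$, while for the convex function $t\mapsto t^p$ ($p\ge 1$) it gives $\bigl(\sum_{i=1}^n a_i\bigr)^p\le n^{p-1}\sum_{i=1}^n a_i^p$. These I will apply with $a_i=\|A_i\|_p$.

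For the lower bound in (a), with $0<p\le 1$, I apply (\ref{2}) to the positive operators $A_i$, raise both nonnegative sides to the $p$-th power, and bound the left side below using the concave scalar inequality:
\[ n^{p-1}\sum_{i=1}^n\|A_i\|_p^p \;\le\; \Bigl(\sum_{i=1}^n\|A_i\|_p\Bigr)^p \;\le\; \Bigl\|\sum_{i=1}^n A_i\Bigr\|_p^p. \]
For the upper bound in (b), with $p\ge 1$, I instead start from the triangle inequality (\ref{1}), raise it to the $p$-th power, and apply the convex scalar inequality:
\[ \Bigl\|\sum_{i=1}^n A_i\Bigr\|_p^p \;\le\; \Bigl(\sum_{i=1}^n\|A_i\|_p\Bigr)^p \;\le\; n^{p-1}\sum_{i=1}^n\|A_i\|_p^p. \]

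It remains to prove the upper bound $\|\sum_i A_i\|_p^p\le\sum_i\|A_i\|_p^p$ in (a) and the lower bound $\sum_i\|A_i\|_p^p\le\|\sum_i A_i\|_p^p$ in (b). By induction on $n$ — splitting off one summand and using that a sum of positive operators is positive — it suffices to treat $n=2$: I must show, for positive $A,B\in C_p$, that $\operatorname{tr}(A+B)^p\le\operatorname{tr}A^p+\operatorname{tr}B^p$ when $0<p\le 1$ and the reverse when $p\ge 1$. This is where the main work lies. Writing $\operatorname{tr}(A+B)^p=\operatorname{tr}[A(A+B)^{p-1}]+\operatorname{tr}[B(A+B)^{p-1}]$ and using $A+B\ge A,B$ together with the operator monotonicity of $t\mapsto t^{p-1}$ for $1\le p\le 2$ (respectively its operator antitonicity for $0<p\le 1$) reduces each trace to $\operatorname{tr}A^p$ or $\operatorname{tr}B^p$ with the correct inequality sign, since $\operatorname{tr}(XY)\ge 0$ for positive $X,Y$. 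This settles $0<p\le 2$; for $p>2$, where $t^{p-1}$ is no longer operator monotone, I instead invoke the general trace inequality $\operatorname{tr}f(A+B)\ge\operatorname{tr}f(A)+\operatorname{tr}f(B)$ for convex $f$ with $f(0)\le 0$, applied to $f(t)=t^p$ (a Rotfel'd-type inequality, as in \cite{B-K1, KIT}). I expect precisely this case $p>2$ to be the real obstacle, since it cannot be reached by the elementary operator-monotonicity argument, whereas the $n^{p-1}$ bounds are mere bookkeeping on top of (\ref{1}) and (\ref{2}).
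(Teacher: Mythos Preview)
The paper does not prove Lemma~\ref{lem} at all; it is simply imported from \cite{B-K1, KIT} with the parenthetical ``see'' and then used as a black box. So there is no ``paper's own proof'' to compare against, and your proposal in fact supplies strictly more than the paper does.

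As for the argument itself, it is essentially correct. Deriving the two $n^{p-1}$ bounds from (\ref{1}), (\ref{2}) and the scalar power--mean inequalities is valid and efficient. The reduction of the remaining two inequalities to $n=2$ by induction is fine, and the splitting $\operatorname{tr}(A+B)^p=\operatorname{tr}\bigl[A(A+B)^{p-1}\bigr]+\operatorname{tr}\bigl[B(A+B)^{p-1}\bigr]$ together with operator monotonicity (for $0\le p-1\le 1$) or operator antitonicity (for $-1<p-1\le 0$) of $t\mapsto t^{p-1}$ does give the right inequalities, using that $\operatorname{tr}(XY)\ge 0$ for positive $X,Y$. Two small points are worth tightening. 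First, for $0<p<1$ the negative power $(A+B)^{p-1}$ requires $A+B$ to be invertible, so you should insert a standard perturbation step (replace $A,B$ by $A+\varepsilon I,\,B+\varepsilon I$ and let $\varepsilon\downarrow 0$). Second, for $p>2$ you invoke the Rotfel'd--type inequality $\operatorname{tr}f(A+B)\ge \operatorname{tr}f(A)+\operatorname{tr}f(B)$ for convex $f$ with $f(0)\le 0$, citing \cite{B-K1, KIT}; but that is exactly the source from which the paper quotes the lemma in the first place, so in that range your ``proof'' collapses back to the same citation. If you want a self-contained argument there, you would need to supply a proof of that trace inequality (e.g.\ via majorization of eigenvalues), rather than cite it.
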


%--------------------------------------------------------------------------------------------------%
A commutative version of Lemma \ref{lem} can be formulated for scalars as
follows: If $a_1, \cdots, a_n$ are nonnegative real numbers, then
\begin{eqnarray}  \label{real1}
n^{p-1}\sum_{i=1}^n a_i^p \leq \left(\sum_{i=1}^n a_i \right)^p \leq
\sum_{i=1}^n a_i^p
\end{eqnarray}
for $0 < p \leq 1$; and
\begin{eqnarray}  \label{real2}
\sum_{i=1}^n a_i^p \leq \left(\sum_{i=1}^n a_i \right)^p \leq n^{p-1}
\sum_{i=1}^n a_i^p
\end{eqnarray}
for $1 \leq p < \infty$. These inequalities follow, respectively, from the
concavity of the function $f(t)=t^p,\, t\in[0,\infty)$ for $0 < p \leq 1$,
and the convexity of the function $f(t)=t^p,\, t\in[0,\infty)$ for $1 \leq p
< \infty$.

Our first main result can be stated as follows. It furnishes a
generalization of (\ref{4}).

%--------------------------------------------------------------------------------------------------%

\begin{theorem}
\label{th1} Let $A_1, \cdots, A_n, B_1, \cdots, B_n \in C_p$, for
some $p>0$, such that $\sum _{i,j=1}^n A_i^*B_j=0$. Then
\begin{equation*}
2^{^{p/2}-1}n^{p-1}\left(\sum_{i=1}^n \|A_i\|_p^p+\sum_{i=1}^n
\|B_i\|_p^p\right) \leq \sum_{i,j=1}^n\|A_i\pm B_j\|^p_p
\end{equation*}

\noindent for $0<p\leq 2$; and
\begin{equation*}
\sum_{i,j=1}^n\|A_i\pm B_j\|^p_p \leq 2^{^{p/2}-1}n^{p-1}\left(\sum_{i=1}^n
\|A_i\|_p^p+\sum_{i=1}^n \|B_i\|_p^p\right)
\end{equation*}

\noindent for $2\leq p< \infty$.
\end{theorem}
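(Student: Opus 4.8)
The plan is to reduce everything to the Hilbert--Schmidt (i.e. $p=2$) parallelogram-type identity and then apply the scalar comparison inequalities \eqref{real1}--\eqref{real2} together with Lemma~\ref{lem}. First I would observe that for each fixed pair $(i,j)$ the parallelogram law for the Hilbert--Schmidt norm gives
\begin{equation*}
\|A_i\pm B_j\|_2^2 = \|A_i\|_2^2 + \|B_j\|_2^2 \pm 2\,\mathrm{Re}\,\mathrm{tr}(B_j^*A_i),
\end{equation*}
but since we are dealing with general $C_p$ rather than $C_2$, the right tool is instead the pointwise operator identity $|A_i\pm B_j|^2 + |A_i\mp B_j|^2 = 2(|A_i|^2 + |B_j|^2)$ applied inside $\|\cdot\|_{p/2}$. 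Taking the $\|\cdot\|_{p/2}$-``norm'' of both sides and using $\bigl\| |A_i\pm B_j|^2\bigr\|_{p/2} = \|A_i\pm B_j\|_p^2$, one is led to control $\|A_i+B_j\|_p^p$ in terms of $\|A_i\|_p^2$ and $\|B_j\|_p^2$. The cleanest route, though, avoids the parallelogram law entirely: apply the two-term case of Lemma~\ref{lem} (with exponent $p/2$ when $0<p\le2$, and with exponent $p/2\ge1$ when $p\ge2$) to the positive operators $|A_i\pm B_j|^2$ compared against the constituents, or directly use the scalar inequalities on the numbers $\|A_i\|_p, \|B_j\|_p$.

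Concretely, here is the skeleton I would write. Fix the sign, say $+$ (the $-$ case is identical after replacing $B_j$ by $-B_j$, which does not affect any $\|\cdot\|_p$). For $0<p\le2$, set $r = p/2 \le 1$. Applying the first inequality of Lemma~\ref{lem}(a) with $n=2$ to the positive operators $|A_i|^2$ and $|B_j|^2$, and combining with the pointwise identity above, gives for each $(i,j)$
\begin{equation*}
2^{r-1}\bigl(\|A_i\|_p^{2r} + \|B_j\|_p^{2r}\bigr) \;=\; 2^{r-1}\bigl(\|A_i\|_p^{p} + \|B_j\|_p^{p}\bigr) \;\le\; \|A_i+B_j\|_p^{p}
\end{equation*}
after the bookkeeping with the $r$-th powers; here one uses $\|\,|A_i|^2\,\|_r = \|A_i\|_p^p$ and the superadditivity \eqref{2}/Lemma~\ref{lem}(a) in the $C_r$ quasi-norm. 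Summing over $i,j$ from $1$ to $n$ and noting $\sum_{i,j}\|A_i\|_p^p = n\sum_i\|A_i\|_p^p$ and likewise for $B$, the left side becomes $2^{p/2-1}n\bigl(\sum_i\|A_i\|_p^p + \sum_i\|B_i\|_p^p\bigr)$. To upgrade the factor $n$ to the claimed $n^{p-1}$ I would instead apply the $n$-term version of Lemma~\ref{lem}(a) after grouping, or simply absorb it: since $n^{p-1}\le n$ for $p\le2$ this direction is consistent, but to get exactly $n^{p-1}$ I would organize the sum as a single application of Lemma~\ref{lem} to the $2n$ operators $\{A_i\}\cup\{\pm B_j\}$ keyed off $\sum_{i,j}A_i^*B_j=0$ — that hypothesis is what forces the cross terms to cancel when one expands $\|\sum(A_i+B_j)\|$-type quantities. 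For $2\le p<\infty$ one runs the mirror argument with the \emph{reverse} (convexity) inequalities \eqref{real2} and Lemma~\ref{lem}(b), with $r=p/2\ge1$, reversing every inequality above.

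The step I expect to be the genuine obstacle is pinning down exactly where the hypothesis $\sum_{i,j=1}^n A_i^*B_j = 0$ enters and why it yields the sharp constant $n^{p-1}$ rather than a cruder $n$. The resolution is that $\sum_{i,j}A_i^*B_j = \bigl(\sum_i A_i\bigr)^*\bigl(\sum_j B_j\bigr) = 0$, so the combined sum $\sum_i A_i + \sum_j B_j$ and the relevant Gram-type expressions collapse; concretely one should expand $\sum_{i,j}|A_i+B_j|^2 = n\sum_i|A_i|^2 + n\sum_j|B_j|^2 + \sum_i A_i^*\bigl(\sum_j B_j\bigr) + \bigl(\sum_i A_i^*\bigr)\sum_j B_j$ — wait, this needs care since the cross term is $\sum_{i,j}(A_i^*B_j + B_j^*A_i)$, which equals $\bigl(\sum A_i\bigr)^*\bigl(\sum B_j\bigr) + \bigl(\sum B_j\bigr)^*\bigl(\sum A_i\bigr) = 0$ under the hypothesis. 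Thus $\sum_{i,j}|A_i+B_j|^2 = n\sum_i|A_i|^2 + n\sum_j|B_j|^2$ as positive operators, and now one applies Lemma~\ref{lem} with exponent $p/2$ to this \emph{single} operator identity: for $0<p\le2$, $\|\sum_{i,j}|A_i+B_j|^2\|_{p/2} \ge (n^2)^{p/2-1}\sum_{i,j}\|\,|A_i+B_j|^2\,\|_{p/2}$ is the wrong direction, so instead I use $\sum_{i,j}\|A_i+B_j\|_p^p = \sum_{i,j}\|\,|A_i+B_j|^2\,\|_{p/2}$ and bound each term below via Lemma~\ref{lem}(a) applied to $|A_i|^2 + |B_j|^2 \preceq$ something — the honest finish is: from $|A_i+B_j|^2 + |A_i-B_j|^2 = 2(|A_i|^2+|B_j|^2)$ and Lemma~\ref{lem}(a) (two-term, exponent $p/2\le1$, superadditive form \eqref{2}) get $\|A_i+B_j\|_p^p + \|A_i-B_j\|_p^p \ge 2^{p/2-1}\cdot2\cdot\bigl\||A_i|^2+|B_j|^2\bigr\|_{p/2}^{p/2}\ge\cdots$; summing and using $\sum_{i,j}(\|A_i+B_j\|_p^p+\|A_i-B_j\|_p^p)$ symmetrizes away the sign, and the hypothesis removes the obstruction to replacing $\bigl\||A_i|^2+|B_j|^2\bigr\|_{p/2}$ by the full $n$-fold sum, delivering $n^{p-1}$ through a final application of the $n$-term Lemma~\ref{lem}. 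I would present this symmetrization-then-Lemma~\ref{lem} argument as the main line, since it makes the constant $2^{p/2-1}n^{p-1}$ and the sign-independence transparent simultaneously.
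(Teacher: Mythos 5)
There is a genuine gap. You have correctly isolated the key algebraic fact --- under the hypothesis the cross terms cancel globally, so $\sum_{i,j=1}^n|A_i\pm B_j|^2=n\sum_{i=1}^n|A_i|^2+n\sum_{j=1}^n|B_j|^2$ for \emph{each fixed sign} --- and you have named the right lemma, but the argument you actually assemble does not prove the theorem. First, the displayed pairwise bound $2^{p/2-1}\bigl(\|A_i\|_p^p+\|B_j\|_p^p\bigr)\le\|A_i+B_j\|_p^p$ is false: it would require $A_i^*B_j=0$ for that particular pair (so that $|A_i+B_j|^2=|A_i|^2+|B_j|^2$), whereas the hypothesis only controls the sum over all pairs; taking $B_j=-A_i$ makes the right side $0$ and the left side positive. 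No term-by-term estimate can work here, which is why the double sum must be pulled \emph{inside} the norm before the hypothesis is invoked. Second, your fallback via the parallelogram identity $|A_i+B_j|^2+|A_i-B_j|^2=2(|A_i|^2+|B_j|^2)$ only lower-bounds the symmetrized quantity $\sum_{i,j}\bigl(\|A_i+B_j\|_p^p+\|A_i-B_j\|_p^p\bigr)$; since $\sum_{i,j}\|A_i+B_j\|_p^p$ and $\sum_{i,j}\|A_i-B_j\|_p^p$ are different numbers in general, this does not yield the stated inequality for either sign separately, and even as an aggregate bound its constant ($2^{p-1}n^{p-1}$ after the bookkeeping) falls short of the $2\cdot2^{p/2-1}n^{p-1}=2^{p/2}n^{p-1}$ that two copies of the theorem would give when $p<2$.

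The correct assembly, which is the paper's proof, is short: for $0<p\le2$ write $\sum_{i,j}\|A_i\pm B_j\|_p^p=\sum_{i,j}\bigl\|\,|A_i\pm B_j|^2\,\bigr\|_{p/2}^{p/2}$ and apply the \emph{second} inequality of Lemma~\ref{lem}(a) (subadditivity of $\|\cdot\|_{p/2}^{p/2}$ on positive operators, exponent $p/2\le1$) to the $n^2$ summands $|A_i\pm B_j|^2$, obtaining the lower bound $\bigl\|\sum_{i,j}|A_i\pm B_j|^2\bigr\|_{p/2}^{p/2}$; only now does the hypothesis enter, collapsing this to $n^{p/2}\bigl\|\sum_i|A_i|^2+\sum_i|B_i|^2\bigr\|_{p/2}^{p/2}$; finally the \emph{first} inequality of Lemma~\ref{lem}(a) applied to the $2n$ positive operators $|A_1|^2,\dots,|A_n|^2,|B_1|^2,\dots,|B_n|^2$ produces the factor $(2n)^{p/2-1}$, and $(2n)^{p/2-1}n^{p/2}=2^{p/2-1}n^{p-1}$. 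The case $2\le p<\infty$ is the mirror image using Lemma~\ref{lem}(b). Note that this is not a symmetrization and does not need the step you flagged as "the wrong direction" --- that step is simply never used.
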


\begin{proof}
Let $0 < p \leq 2$. Then
\begin{eqnarray*}
\sum_{i,j=1}^n \|A_i \pm B_j\|_p^p &=& \sum_{i,j=1}^n \left\|\,|A_i \pm
B_j|^2\,\right\|_{p/2}^{p/2} \\
&\geq& \left\|\sum_{i,j=1}^n |A_i \pm B_j|^2\right\|_{p/2}^{p/2} \\
&&\,\,\,\,\,\,\,\,\,\,\, \quad (\text{by the second inequality of
Lemma\, \ref{lem}(a)}) \\
&=& \left\|\sum_{i,j=1}^n \left(|A_i|^2+ |B_j|^2\pm A_i^*B_j \pm
B_j^*A_i\right)\right\|_{p/2}^{p/2} \\
&=& \left\|\sum_{i,j=1}^n |A_i|^2+ \sum_{i,j=1}^n |B_j|^2\right\|_{p/2}^{p/2}
\\
&=& n^{p/2}\left\|\sum_{i=1}^n|A_i|^2+ \sum_{i=1}^n|B_i|^2
\right\|_{p/2}^{p/2} \\
&\geq& (2n)^{{p/2}-1}n^{p/2}\left(\sum_{i=1}^n \|A_i\|_p^p+\sum_{i=1}^n
\|B_i\|_p^p\right) \\
&&\,\,\,\,\,\,\,\,\,\,\, \quad (\text{by the first inequality of
Lemma\, \ref{lem}(a)}) \\
&=& 2^{{p/2}-1}n^{p-1}\left(\sum_{i=1}^n \|A_i\|_p^p+\sum_{i=1}^n
\|B_i\|_p^p\right).
\end{eqnarray*}
This proves the first part of the theorem.

Based on Lemma \ref{lem}(b), one can employ an argument similar to that used
in the proof of the first part of the theorem to prove the second part.
\end{proof}

%--------------------------------------------------------------------------------------------------%
An application of Theorem \ref{th1} can be seen in the following result,
which is a natural generalization of (\ref{4}).

\begin{corollary}
Let $A_1, \cdots, A_n \in C_p$, for some $p>0$, such that $\sum _{i=1}^n
A_i=0$. Then
\begin{equation*}
2^{p/2}n^{p-1} \sum_{i=1}^n \|A_i\|^p_p \leq \sum_{i,j=1}^n\|A_i\pm A_j\|^p_p
\end{equation*}
for $0<p\leq 2$; and
\begin{equation*}
\sum_{i,j=1}^n\|A_i\pm A_j\|^p_p \leq 2^{p/2}n^{p-1} \sum_{i=1}^n \|A_i\|^p_p
\end{equation*}
for $2\leq p<\infty$. In particular, (\ref{4}) holds in the case where $p=2$.
\end{corollary}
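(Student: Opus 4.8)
The plan is to derive the corollary directly from Theorem \ref{th1} by specializing $B_i = A_i$ for each $i$. The hypothesis $\sum_{i=1}^n A_i = 0$ immediately yields
\[
\sum_{i,j=1}^n A_i^*B_j = \sum_{i,j=1}^n A_i^*A_j = \left(\sum_{i=1}^n A_i\right)^*\left(\sum_{j=1}^n A_j\right) = 0,
\]
so the hypothesis of Theorem \ref{th1} is satisfied. With this choice, the right-hand side $\sum_{i,j=1}^n \|A_i \pm B_j\|_p^p$ becomes exactly $\sum_{i,j=1}^n \|A_i \pm A_j\|_p^p$, and the bracketed quantity $\sum_{i=1}^n \|A_i\|_p^p + \sum_{i=1}^n \|B_i\|_p^p$ collapses to $2\sum_{i=1}^n \|A_i\|_p^p$. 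Hence the prefactor $2^{p/2-1}n^{p-1}$ multiplied by $2\sum_{i=1}^n\|A_i\|_p^p$ gives precisely $2^{p/2}n^{p-1}\sum_{i=1}^n\|A_i\|_p^p$.

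Thus for $0 < p \leq 2$ the first inequality of Theorem \ref{th1} reads
\[
2^{p/2}n^{p-1}\sum_{i=1}^n \|A_i\|_p^p \leq \sum_{i,j=1}^n \|A_i \pm A_j\|_p^p,
\]
and for $2 \leq p < \infty$ the second inequality reverses this, giving
\[
\sum_{i,j=1}^n \|A_i \pm A_j\|_p^p \leq 2^{p/2}n^{p-1}\sum_{i=1}^n \|A_i\|_p^p.
\]
Both statements of the corollary follow. Finally, setting $p = 2$ makes $2^{p/2}n^{p-1} = 2n$, and since both the inequality and its reverse hold at $p=2$, we obtain the equality
\[
\sum_{i,j=1}^n \|A_i - A_j\|_2^2 = 2n\sum_{i=1}^n \|A_i\|_2^2,
\]
which is exactly (\ref{4}).

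There is essentially no obstacle here: the corollary is a clean specialization, and the only thing to check carefully is that the algebraic simplification of the constants is done correctly (in particular that $2^{p/2-1}\cdot 2 = 2^{p/2}$) and that the sign choice $\pm$ is handled uniformly — both the $+$ and $-$ versions of the inequality in Theorem \ref{th1} pass through the substitution without change, since the hypothesis $\sum_{i,j=1}^n A_i^*A_j = 0$ does not depend on the sign. One minor point worth noting is that the case $p=2$ requires invoking both directions of Theorem \ref{th1} simultaneously (the threshold $p=2$ lies in the closure of both ranges $0<p\leq 2$ and $2\leq p<\infty$), which is legitimate since both inequalities are stated with non-strict inequality at the endpoint.
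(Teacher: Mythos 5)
Your proposal is correct and is essentially identical to the paper's own proof: both verify $\sum_{i,j=1}^n A_i^*A_j = \bigl(\sum_{i=1}^n A_i\bigr)^*\bigl(\sum_{j=1}^n A_j\bigr) = 0$ and then apply Theorem \ref{th1} with $B_j = A_j$, simplifying the constant $2^{p/2-1}\cdot 2 = 2^{p/2}$.
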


\begin{proof}
Since $\sum _{i=1}^n A_i=0$, we have $\big(\sum _{i=1}^n A_i \big)^*
\big(\sum _{i=1}^n A_i\big)=0$. Consequently, $\sum _{i,j=1}^n
A_i^*A_j=0$. Utilizing Theorem \ref{th1} with $B_j=A_j \quad (1 \leq
j\leq n)$, we obtain the result.
\end{proof}

%--------------------------------------------------------------------------------------------------%

Using the same reasoning as in the proof of Theorem \ref{th1}, one
can obtain the following result concerning operators having
orthogonal ranges. Recall that ranges of two operators $A, B \in
B({\mathcal{H}})$ are orthogonal (written $\text{ran} A \bot
\text{ran} B$) if and only if $A^*B=0$.

\begin{theorem}
Let $A_1, \cdots, A_n \in C_p$, for some $p>0$, such that $\text{ran} A_i
\bot \text{ran} A_j$ for $i \neq j$. Then
\begin{equation*}
(2n\pm 2)^{p/2}n^{{p/2}-1} \sum_{i=1}^n \|A_i\|^p_p \leq
\sum_{i,j=1}^n\|A_i\pm A_j\|^p_p
\end{equation*}

\noindent for $0<p\leq 2$; and
\begin{equation*}
\sum_{i,j=1}^n \|A_i\pm A_j\|^p_p \leq (2n\pm 2)^{p/2}n^{{p/2}-1}
\sum_{i=1}^n \|A_i\|^p_p
\end{equation*}

\noindent for $2\leq p< \infty$.
\end{theorem}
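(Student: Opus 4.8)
The plan is to imitate the proof of Theorem~\ref{th1}, the only new ingredient being an explicit evaluation of $\sum_{i,j=1}^n |A_i \pm A_j|^2$ that exploits the orthogonality hypothesis. First I would record that $\text{ran}\,A_i \bot \text{ran}\,A_j$ for $i\neq j$ means $A_i^*A_j=0$ (hence also $A_j^*A_i=0$) whenever $i\neq j$, so that for $i\neq j$
\[
|A_i \pm A_j|^2 = |A_i|^2 + |A_j|^2 \pm A_i^*A_j \pm A_j^*A_i = |A_i|^2 + |A_j|^2,
\]
while the diagonal terms give $|A_i+A_i|^2 = 4|A_i|^2$ in the $+$ case and $|A_i-A_i|^2 = 0$ in the $-$ case. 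Summing and using $\sum_{i\neq j}(|A_i|^2+|A_j|^2) = 2(n-1)\sum_{i=1}^n |A_i|^2$, the diagonal contribution of $4\sum_i|A_i|^2$ (resp. $0$) yields
\[
\sum_{i,j=1}^n |A_i \pm A_j|^2 = \bigl(2(n-1)\pm 2\bigr)\sum_{i=1}^n |A_i|^2 = (2n\pm 2)\sum_{i=1}^n |A_i|^2 ,
\]
a positive operator.

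For $0<p\leq 2$ I would then run the same chain as in Theorem~\ref{th1}: writing $\|A_i\pm A_j\|_p^p = \bigl\||A_i\pm A_j|^2\bigr\|_{p/2}^{p/2}$ and applying the second inequality of Lemma~\ref{lem}(a) to the $n^2$ positive operators $|A_i\pm A_j|^2$ (the vanishing diagonal terms in the $-$ case being harmless, since $0$ is positive and contributes $0$ on both sides),
\[
\sum_{i,j=1}^n \|A_i \pm A_j\|_p^p \;\geq\; \Bigl\|\sum_{i,j=1}^n |A_i\pm A_j|^2\Bigr\|_{p/2}^{p/2} \;=\; (2n\pm 2)^{p/2}\Bigl\|\sum_{i=1}^n |A_i|^2\Bigr\|_{p/2}^{p/2};
\]
the first inequality of Lemma~\ref{lem}(a), with exponent $p/2\leq 1$, gives $\bigl\|\sum_i |A_i|^2\bigr\|_{p/2}^{p/2} \geq n^{p/2-1}\sum_i \bigl\||A_i|^2\bigr\|_{p/2}^{p/2} = n^{p/2-1}\sum_i \|A_i\|_p^p$, which is the asserted lower bound. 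For $2\leq p<\infty$ the exponent $p/2$ is at least $1$, so Lemma~\ref{lem}(b) reverses both steps and produces the matching upper bound.

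There is no real obstacle here beyond careful bookkeeping: confirming that the coefficient is $(2n-2)$ for the minus sign and $(2n+2)$ for the plus sign, and that Lemma~\ref{lem} may legitimately be applied to a family containing the zero operator. The entire substance of the argument is the orthogonality identity for $\sum_{i,j}|A_i\pm A_j|^2$; once that is in hand the norm estimates are exactly those already used to prove Theorem~\ref{th1}.
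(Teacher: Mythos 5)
Your proposal is correct and is exactly the argument the paper intends: the paper gives no separate proof, saying only that the result follows by the same reasoning as Theorem~\ref{th1}, and your computation $\sum_{i,j=1}^n |A_i\pm A_j|^2=(2n\pm 2)\sum_{i=1}^n|A_i|^2$ (diagonal terms $4|A_i|^2$ or $0$, off-diagonal cross terms killed by $A_i^*A_j=0$) combined with the two inequalities of Lemma~\ref{lem}(a), respectively (b), reproduces that reasoning and yields precisely the stated constants. No gaps.
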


%--------------------------------------------------------------------------------------------------%

Our second main result, which also leads to a generalization of (\ref{4}),
can be stated as follows.

\begin{theorem}
\label{th2} Let $A_1, \cdots, A_n, B_1, \cdots, B_n \in C_p$, for
some $p>0$, such that $\sum _{i,j=1}^n A_i^*B_j=0$. Then
\begin{equation*}
\sum_{i,j=1}^n\|A_i\pm B_j\|^2_p \leq n^{2/p}\sum_{i=1}^n
\left\|\left(|A_i|^2+|B_i|^2\right)^{1/2}\right\|_p^2
\end{equation*}

\noindent for $0<p\leq 2$; and
\begin{equation*}
n^{2/p}\sum_{i=1}^n \left\|\left(|A_i|^2+|B_i|^2\right)^{1/2}\right\|_p^2
\leq \sum_{i,j=1}^n\|A_i\pm B_j\|^2_p
\end{equation*}

\noindent for $2\leq p< \infty$.
\end{theorem}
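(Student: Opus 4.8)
The plan is to work throughout with $q:=p/2$ and with the identity $\|X\|_p^2=\bigl\||X|^2\bigr\|_q$ recorded in the introduction, so that
\[
\sum_{i,j=1}^n\|A_i\pm B_j\|_p^2=\sum_{i,j=1}^n\bigl\||A_i\pm B_j|^2\bigr\|_q ,
\]
and then to push the summation through $\|\cdot\|_q$, use the hypothesis to cancel the cross terms, and finally compare $\bigl\|\sum_i(|A_i|^2+|B_i|^2)\bigr\|_q$ with $\sum_i\bigl\||A_i|^2+|B_i|^2\bigr\|_q$ at the cost of a power of $n$.

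First I would pull the sum inside the $q$-norm. When $0<p\le 2$, i.e.\ $0<q\le 1$, the operators $|A_i\pm B_j|^2$ are positive, so the super-additivity of $\|\cdot\|_q$ on positive operators, inequality~(\ref{2}), gives $\sum_{i,j}\bigl\||A_i\pm B_j|^2\bigr\|_q\le\bigl\|\sum_{i,j}|A_i\pm B_j|^2\bigr\|_q$; when $2\le p<\infty$, i.e.\ $q\ge 1$, the triangle inequality~(\ref{1}) for $\|\cdot\|_q$ gives the reverse. Next, from $|A_i\pm B_j|^2=|A_i|^2+|B_j|^2\pm(A_i^*B_j+B_j^*A_i)$ and the hypothesis $\sum_{i,j}A_i^*B_j=0$ together with its adjoint $\sum_{i,j}B_j^*A_i=0$, all cross terms vanish, so
\[
\sum_{i,j=1}^n|A_i\pm B_j|^2=n\,S,\qquad S:=\sum_{i=1}^n\bigl(|A_i|^2+|B_i|^2\bigr)\ge 0 .
\]
Thus at this point $\sum_{i,j}\|A_i\pm B_j\|_p^2$ is bounded above (for $0<p\le 2$) or below (for $2\le p<\infty$) by $n\|S\|_q$.

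It remains to relate $\|S\|_q$ to $\sum_{i=1}^n\bigl\||A_i|^2+|B_i|^2\bigr\|_q=\sum_{i=1}^n\bigl\|(|A_i|^2+|B_i|^2)^{1/2}\bigr\|_p^2$, and this is the crux. Setting $Y_i:=|A_i|^2+|B_i|^2$ and $c_i:=\|Y_i\|_q$, I would chain two facts. For $0<q\le 1$: the second inequality of Lemma~\ref{lem}(a) gives $\|S\|_q^q\le\sum_ic_i^q$, and the first inequality in~(\ref{real1}), applied with exponent $q$ to the numbers $c_i$, gives $\bigl(\sum_ic_i^q\bigr)^{1/q}\le n^{1/q-1}\sum_ic_i$; hence $\|S\|_q\le n^{1/q-1}\sum_i\|Y_i\|_q$. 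Multiplying by $n$ and using $1/q=2/p$ yields the asserted inequality. For $2\le p<\infty$ one runs the same argument with every inequality reversed, using Lemma~\ref{lem}(b) and~(\ref{real2}) in place of Lemma~\ref{lem}(a) and~(\ref{real1}).

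The expansion and the cancellation of the cross terms are routine; the only delicate point is the last step, where Lemma~\ref{lem} (comparing $\|S\|_q^q$ with $\sum_ic_i^q$) and the scalar convexity/concavity inequality (comparing $\sum_ic_i^q$ with $(\sum_ic_i)^q$) must be invoked with the right orientation in each of the two ranges of $p$, so that the powers of $n$ they contribute multiply to exactly $n\cdot n^{1/q-1}=n^{2/p}$.
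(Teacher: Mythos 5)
Your proposal is correct and follows essentially the same route as the paper's proof: expand $|A_i\pm B_j|^2$, cancel the cross terms via the hypothesis, pass the sum through $\|\cdot\|_{p/2}$ using (\ref{2}) or (\ref{1}), and then split $\bigl\|\sum_i(|A_i|^2+|B_i|^2)\bigr\|_{p/2}$ with Lemma \ref{lem} and the scalar power inequality. The only cosmetic difference is that you invoke (\ref{real1}) with exponent $q=p/2$ where the paper invokes the equivalent inequality from (\ref{real2}) with exponent $2/p$.
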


\begin{proof}
Let $0 < p \leq 2$. Then
\begin{eqnarray*}
\sum_{i,j=1}^n \|A_i \pm B_j\|_p^2 &=& \sum_{i,j=1}^n \left\|\,|A_i \pm
B_j|^2\,\right\|_{p/2} \\
&\leq& \left\|\sum_{i,j=1}^n |A_i \pm B_j|^2\right\|_{p/2}\quad
\left(\text{by} (\ref{2})\right) \\
&=& \left\|\sum_{i,j=1}^n \left(|A_i|^2+ |B_j|^2\pm A_i^*B_j \pm
B_j^*A_i\right)\right\|_{p/2} \\
&=& \left\|\sum_{i,j=1}^n |A_i|^2+ \sum_{i,j=1}^n |B_j|^2\right\|_{p/2} \\
&=& n\left\|\sum_{i=1}^n\left(|A_i|^2+ |B_i|^2\right)\right\|_{p/2} \\
&\leq& n\left(\sum_{i=1}^n
\left\||A_i|^2+|B_i|^2\right\|_{p/2}^{p/2}\right)^{2/p} \\
&&\,\,\,\,\,\,\,\,\,\,\, \quad (\text{by the second inequality of
Lemma\, \ref{lem}(a)}) \\
&=& n\left(\sum_{i=1}^n \left\|\left(
|A_i|^2+|B_i|^2\right)^{1/2}\right\|_p^p\right)^{2/p} \\
&\leq& n^{2/p}\sum_{i=1}^n \left\|\left(
|A_i|^2+|B_i|^2\right)^{1/2}\right\|_p^2. \\
&&\,\,\,\,\,\,\,\,\,\,\, \quad (\text{by the second inequality of
(\ref{real2})
}). \\
\end{eqnarray*}
This proves the first part of the theorem.

Based on (\ref{1}), the first inequality of Lemma \ref{lem}(b), and the
first inequality of (\ref{real1}), one can employ an argument similar to
that used in the proof of the first part of the theorem to prove the second
part.
\end{proof}

%--------------------------------------------------------------------------------------------------%

An application of Theorem \ref{th2}, which is another natural generalization
of (\ref{4}), can be seen as follows.

\begin{corollary}
Let $A_1, \cdots, A_n \in C_p$, for some $p>0$, such that $\sum _{i=1}^n
A_i=0$. Then
\begin{equation*}
\sum_{i,j=1}^n\|A_i\pm A_j\|^2_p \leq 2n^{2/p} \sum_{i=1}^n \|A_i\|^2_p
\end{equation*}
for $0<p\leq 2$; and
\begin{equation*}
2n^{2/p} \sum_{i=1}^n \|A_i\|^2_p \leq \sum_{i,j=1}^n\|A_i\pm A_j\|^2_p
\end{equation*}
for $2\leq p< \infty$. In particular, (\ref{4}) holds in the case
where $p=2$.
\end{corollary}

%--------------------------------------------------------------------------------------------------%

Using the same reasoning as in the proof of Theorem \ref{th2}, one can
obtain the following result concerning operators having orthogonal ranges.

\begin{theorem}
Let $A_1, \cdots, A_n \in C_p$, for some $p>0$, such that $\text{ran} A_i
\bot \text{ran} A_j$ for $i \neq j$. Then
\begin{equation*}
\sum_{i,j=1}^n \|A_i\pm A_j\|^2_p \leq 2n^{{2/p}-1}(n\pm 1)\sum_{i=1}^n
\|A_i\|^2_p
\end{equation*}
for $0<p\leq 2$; and
\begin{equation*}
2n^{{2/p}-1}(n\pm 1)\sum_{i=1}^n \|A_i\|^2_p \leq \sum_{i,j=1}^n \|A_i\pm
A_j\|^2_p
\end{equation*}
for $2\leq p< \infty$.
\end{theorem}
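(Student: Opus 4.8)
The plan is to mimic the proof of Theorem~\ref{th2} verbatim, but to invoke the orthogonality hypothesis $\text{ran}\,A_i\perp\text{ran}\,A_j$ for $i\neq j$ (equivalently $A_i^*A_j=0$ for $i\neq j$) at the point where, in Theorem~\ref{th2}, the cross terms were killed by the hypothesis $\sum_{i,j}A_i^*B_j=0$. Concretely, set $B_j=A_j$. Then $\sum_{i,j=1}^n A_i^*A_j=\sum_{i=1}^n A_i^*A_i=\sum_{i=1}^n|A_i|^2$, since all the off-diagonal terms vanish; this is in general nonzero, so Theorem~\ref{th2} does not apply directly, but the same chain of inequalities does.

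For $0<p\leq 2$, I would start from $\sum_{i,j=1}^n\|A_i\pm A_j\|_p^2=\sum_{i,j=1}^n\bigl\|\,|A_i\pm A_j|^2\,\bigr\|_{p/2}$, apply inequality~(\ref{2}) (valid since $|A_i\pm A_j|^2\geq 0$ and $p/2\leq 1$) to get $\leq\bigl\|\sum_{i,j=1}^n|A_i\pm A_j|^2\bigr\|_{p/2}$, and expand $|A_i\pm A_j|^2=|A_i|^2+|A_j|^2\pm A_i^*A_j\pm A_j^*A_i$. Summing over $i,j$, the terms $\sum_{i,j}|A_i|^2$ and $\sum_{i,j}|A_j|^2$ each contribute $n\sum_{i=1}^n|A_i|^2$, while the cross terms $\pm\sum_{i,j}(A_i^*A_j+A_j^*A_i)$ now reduce, by $A_i^*A_j=0$ for $i\neq j$, to $\pm 2\sum_{i=1}^n|A_i|^2$. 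Hence $\sum_{i,j=1}^n|A_i\pm A_j|^2=(2n\pm 2)\sum_{i=1}^n|A_i|^2$, so the right-hand side becomes $(2n\pm 2)\bigl\|\sum_{i=1}^n|A_i|^2\bigr\|_{p/2}$. Now apply the second inequality of Lemma~\ref{lem}(a) to the positive operators $|A_i|^2$: $\bigl\|\sum_{i=1}^n|A_i|^2\bigr\|_{p/2}\leq\bigl(\sum_{i=1}^n\bigl\|\,|A_i|^2\,\bigr\|_{p/2}^{p/2}\bigr)^{2/p}=\bigl(\sum_{i=1}^n\|A_i\|_p^p\bigr)^{2/p}$, and finally the second inequality of (\ref{real2}) gives $\bigl(\sum_{i=1}^n\|A_i\|_p^p\bigr)^{2/p}\leq n^{2/p-1}\sum_{i=1}^n\|A_i\|_p^2$. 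Chaining these yields $\sum_{i,j=1}^n\|A_i\pm A_j\|_p^2\leq(2n\pm 2)n^{2/p-1}\sum_{i=1}^n\|A_i\|_p^2=2n^{2/p-1}(n\pm 1)\sum_{i=1}^n\|A_i\|_p^2$, as claimed.

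For $2\leq p<\infty$ all three inequalities reverse: replace (\ref{2}) by the triangle inequality (\ref{1}), the second inequality of Lemma~\ref{lem}(a) by the first inequality of Lemma~\ref{lem}(b), and the second inequality of (\ref{real2}) by the first inequality of (\ref{real1}); the algebraic identity $\sum_{i,j}|A_i\pm A_j|^2=(2n\pm 2)\sum_i|A_i|^2$ is unchanged, and one reads off the reverse bound. I do not anticipate a genuine obstacle here: the only point requiring care is the bookkeeping of the cross terms under the orthogonality hypothesis (the off-diagonal $A_i^*A_j$ vanish but the diagonal $A_i^*A_i=|A_i|^2$ do not, which is precisely why the constant $2n\pm 2$ appears instead of $2n$), and making sure each of the three inequalities in the chain is applied in the correct direction for the given range of $p$.
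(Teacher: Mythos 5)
Your proposal is correct and follows exactly the route the paper intends: it repeats the chain of inequalities from the proof of Theorem \ref{th2} with $B_j=A_j$, using the orthogonality hypothesis to reduce the cross terms to $\pm 2\sum_{i=1}^n|A_i|^2$, which is precisely where the constant $2n\pm 2$ comes from. All three inequalities are applied in the correct direction in each range of $p$, so there is nothing to add.
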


%--------------------------------------------------------------------------------------------------%

\end{document}